\documentclass[11pt]{amsart}
   
\usepackage{amsthm,amsmath,amssymb,amscd,amsfonts,latexsym}

\textwidth=5in
\textheight=7.4in

\setlength{\parskip}{.1 in}

\theoremstyle{plain}
\newtheorem{theorem}{Theorem}[section]

\newtheorem{corollary}[theorem]{Corollary}

\newtheorem{def-thm}[theorem]{Definition-Theorem}
\newtheorem{lemma}[theorem]{Lemma}

\theoremstyle{definition}

\newtheorem{remark}[theorem]{Remark}

\newcommand{\PP}{\mathbb{P}}

\newcommand{\RR}{\mathbb{R}}

\newcommand{\NN}{\mathbb{N}}

\DeclareMathOperator{\Ric}{Ric}

\DeclareMathOperator{\kod}{kod}

\DeclareMathOperator{\divisor}{div}

\allowdisplaybreaks

\begin{document}

\title{Scalar curvature and uniruledness on projective manifolds}

\begin{abstract} 
It is a basic tenet in complex geometry that {\it negative} curvature corresponds, in a suitable sense, to the absence of rational curves on, say, a complex projective manifold, while {\it positive} curvature corresponds to the abundance of rational curves. In this spirit, we prove in this note that a projective manifold $M$ with a K\"ahler metric with positive total scalar curvature is uniruled, which is equivalent to every point of $M$ being contained in a rational curve. We also prove that if $M$ possesses a K\"ahler metric of total scalar curvature equal to zero, then either $M$ is uniruled or its canonical line bundle is torsion. The proof of the latter theorem is partially based on the observation that if $M$ is not uniruled, then the total scalar curvatures of all K\"ahler metrics on $M$ must have the same sign, which is either zero or negative.
\end{abstract}

\author{Gordon Heier} \author{Bun Wong}

\address{Department of Mathematics\\University of Houston\\4800 Calhoun Road, Houston, TX 77204\\USA}
\email{heier@math.uh.edu}

\address{Department of Mathematics\\UC Riverside\\900 University Avenue\\ Riverside, CA 92521\\USA}
\email{wong@math.ucr.edu}

\subjclass[2010]{14J10, 14J32, 14M20, 32Q10}
\keywords{Complex projective manifolds, K\"ahler metrics, scalar curvature, rational curves, uniruledness, Calabi-Yau manifolds}
\thanks{The first author is partially supported by the National Security Agency under Grant Number H98230-12-1-0235.}
\maketitle

\section{Introduction and statement of the results} 
A complex projective manifold $M$ of dimension $n$ is uniruled if and only if there exists a dominant rational map from $\PP^1\times N$ onto $M$, where $N$ is a complex projective variety of dimension $n-1$. Equivalently, $M$ is uniruled if and only if there exists a rational curve passing through every point of $M$. It is a well-known open problem whether all complex projective manifolds of Kodaira dimension $-\infty$ are uniruled. In dimensions one and two, this is classical. In dimension three, this follows from the Minimal Model Program in Algebraic Geometry. The converse of this statement is easily verified in any dimension.\par

From a differential geometric point of view, complex projective manifolds of Kodaira dimension $-\infty$ resemble compact K\"ahler manifolds of positive scalar curvature. Yau \cite{Yau_74} proved that the Kodaira dimension of a compact K\"ahler manifold with positive total scalar curvature must be $-\infty$. Moreover, it is not hard to see that every uniruled manifold is rationally dominated by a K\"ahler manifold with positive scalar curvature (see Section \ref{sec_concl_rem}). Nonetheless, the converse of this statement is difficult to establish without a good method to guarantee the existence of rational curves under the assumption of positivity of scalar curvature.\par

One highlight of the development of this type of problem along differential geometric lines was when Siu-Yau \cite{siu_yau} proved the Frankel Conjecture, which states that a compact K\"ahler manifold of positive bisectional curvature is biholomorphic to a projective space. The crucial step of their proof relied on the production of a rational curve using the analyticity of a stable harmonic map under the positive bisectional curvature assumption. Around the same time, Mori \cite{Mori_79} settled a more general conjecture of Hartshorne, which states that a compact complex manifold with ample tangent bundle is biholomorphic to projective space. In his proof, Mori introduced the {\it bend and break} technique, which is a method, partially relying on arguments in positive characteristic, to show the existence of rational curves based on certain positivity assumptions. Mori's technique has important consequences. For example, let $C$ be a curve passing through a point $x$ on a complex projective manifold $X$, and let $K$ denote the canonical line bundle of $X$. If the intersection number $-K.C$ is positive, then there is a rational curve passing through the same point $x$. In particular, a {\it Fano manifold} $X$, i.e., a compact complex manifold with positive first Chern class $c_1(X)=-c_1(K)$, is uniruled. Note that, due to Yau's proof of Calabi's Conjecture, a compact complex manifold has positive first Chern class if and only if it possesses a K\"ahler metric with positive Ricci curvature. Actually, more is true, namely that Fano manifolds are rationally connected. We refrain from giving detailed attributions of these results and instead refer the reader to the monographs \cite{kollar_book}, \cite{debarre_book} and the references contained therein.\par

The first main result proven in this note is that, instead of assuming positivity of the Ricci curvature, it suffices to make the much weaker assumption of positivity of the total scalar curvature to obtain uniruledness, as differential geometers would desire. Note that on the spectrum of the usual positivity notions in differential geometry, positive total scalar curvature, which is expressed in terms of the positivity of just one number, is the weakest notion of positivity. Our result is the following.\par

\begin{theorem}\label{mthm1}
Let $M$ be a projective manifold with a K\"ahler metric with positive total scalar curvature. Then $M$ is uniruled.
\end{theorem}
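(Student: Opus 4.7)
My plan is to translate the analytic hypothesis into a numerical/cohomological statement about intersection numbers, and then apply the theorem of Boucksom--Demailly--P\u aun--Peternell (BDPP) characterizing uniruledness.

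\textbf{Step 1: cohomological reformulation.} Let $\omega$ be the K\"ahler metric with positive total scalar curvature, and let $\rho_\omega$ denote its Ricci form. A standard computation gives
\[
\int_M s_\omega \, \omega^n \;=\; n\int_M \rho_\omega \wedge \omega^{n-1}.
\]
Since $\rho_\omega$ represents $2\pi c_1(M) = -2\pi c_1(K_M)$ in de Rham cohomology, positivity of the total scalar curvature is equivalent to
\[
K_M \cdot [\omega]^{n-1} \;<\; 0.
\]

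\textbf{Step 2: the class $[\omega]^{n-1}$ is movable.} Complete-intersection curves cut out by $(n-1)$ generic members of a very ample linear system move in a family covering $M$, and by taking limits of rescaled integer Kähler classes the numerical class $[\omega]^{n-1}$ lies in the closure of the cone of movable curves on $M$.

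\textbf{Step 3: invoke BDPP.} By the main theorem of Boucksom--Demailly--P\u aun--Peternell, the pseudo-effective cone $\overline{\mathrm{Eff}}(M) \subset N^1(M)_\RR$ is dual to the closure of the movable cone of curves. Combined with Step~2, the inequality $K_M \cdot [\omega]^{n-1} < 0$ shows that $K_M$ is \emph{not} pseudo-effective. The same BDPP theorem asserts that a smooth projective variety is uniruled if and only if its canonical divisor is not pseudo-effective, and hence $M$ is uniruled.

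\textbf{Main obstacle.} There is essentially no analytic obstacle left once one has BDPP at hand: the proof is a short chain of implications. The real work is hidden in BDPP, which uses mass concentration for Monge--Amp\`ere currents to produce covering families of rational curves out of negativity of $K_M$ against movable classes. An alternative, more hands-on route would be to combine the numerical negativity $K_M \cdot [\omega]^{n-1} < 0$ with Miyaoka--Mori--style bend-and-break applied to complete-intersection curves, which also ultimately yields uniruledness; but BDPP packages exactly this kind of conclusion in the cleanest form and is the natural reference.
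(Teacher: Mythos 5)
Your overall architecture coincides with the paper's: reduce positive total scalar curvature to $\int_M c_1(K_M)\wedge[\omega]^{n-1}<0$, invoke the BDPP result that a non-uniruled projective manifold has pseudo-effective canonical bundle, and derive a contradiction from the fact that a pseudo-effective class must pair non-negatively with $[\omega]^{n-1}$. Steps 1 and 3 are fine.

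The genuine soft spot is Step 2. You justify that $[\omega]^{n-1}$ lies in the closed movable cone ``by taking limits of rescaled integer K\"ahler classes,'' but an arbitrary K\"ahler class on a projective manifold need not be approximable by (rescaled) integral or even real N\'eron--Severi classes: the K\"ahler cone is open in $H^{1,1}(M,\RR)$, whereas the ample cone sits inside $\mathrm{NS}_\RR(M)$, which is a proper subspace whenever $h^{1,1}(M)>\rho(M)$ (e.g.\ a generic abelian surface has $\rho=1$ but $h^{1,1}=4$). So the approximation argument fails for transcendental K\"ahler classes, and with it your route to placing $[\omega]^{n-1}$ in the movable cone of curves, which a priori lives in $N_1(M)_\RR$. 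The conclusion you need, however, is true and is better proved directly, which is exactly what the paper does: pseudo-effectivity of $K_M$ gives a singular hermitian metric whose curvature current $\Theta$ is a closed positive $(1,1)$-current, and $\int_M\Theta\wedge\omega^{n-1}\ge 0$ because $\omega^{n-1}$ decomposes (locally via a unitary coframe, globally via a partition of unity) as a sum of strongly positive forms $\sqrt{-1}^{\,n-1}a_1\wedge\bar a_1\wedge\cdots\wedge a_{n-1}\wedge\bar a_{n-1}$, against which a positive current integrates non-negatively. Replacing your Step 2 by this positive-current computation (or, alternatively, restricting attention from the start to the Hodge metric of a very ample divisor is \emph{not} an option here, since the hypothesis is about a given, possibly transcendental, K\"ahler metric) closes the gap and recovers the paper's proof.
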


The general philosophy is that a complex projective manifold which is not uniruled must satisfy certain generic positivity conditions in terms of its cotangent bundle in light of \cite{MM}, \cite{M1}. Concretely, we will be using the result of Boucksom-Demailly-Paun-Peternell \cite{BDPP_04}, who proved that if a projective manifold  is not uniruled then it has a pseudo-effective canonical line bundle. On the whole, our results should be considered as differential geometric interpretations of the theory of bend and break and the recent work \cite{BDPP_04} on the duality of certain cones (see \cite[Theorem 2.2]{BDPP_04}, reproduced below as Theorem \ref{bdpp_mthm}).\par
In general, it is not true that a K\"ahler manifold with positive holomorphic sectional curvature has positive first Chern class (see the examples on Hirzebruch surfaces constructed by Hitchin in \cite{Hitchin}). It is thus not possible to directly conclude the uniruledness of a K\"ahler manifold with positive holomorphic sectional curvature from the earlier results. However, it does follow from a pointwise argument due to Berger \cite{Berger} that the scalar curvature (and thus also the total scalar curvature) of a K\"ahler metric of positive holomorphic sectional curvature is also positive. Thus, we obtain the following corollary of Theorem \ref{mthm1}.

\begin{corollary}
Let $M$ be a projective manifold with a K\"ahler metric with positive holomorphic sectional curvature. Then $M$ is uniruled. 
\end{corollary}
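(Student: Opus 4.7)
The plan is to reduce the corollary directly to Theorem \ref{mthm1} by showing that positivity of holomorphic sectional curvature at each point forces positivity of the scalar curvature at each point, whence the total scalar curvature is positive by integration.

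The key input is Berger's pointwise averaging identity. Given a K\"ahler metric $g$ on $M$, let $H(X) = R(X,\bar X, X, \bar X)/|X|^4$ denote the holomorphic sectional curvature in the direction of a nonzero tangent vector $X$. Berger's computation shows that averaging $H$ over the unit sphere in $T_pM$ (with respect to the standard measure induced by $g$) produces a fixed positive constant, depending only on $n=\dim_{\CC}M$, times the scalar curvature $s(p)$ of $g$ at $p$. Thus if $H$ is strictly positive at every point of $M$, then $s(p)>0$ for every $p\in M$.

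Having pointwise positivity of the scalar curvature, the total scalar curvature
\[
\int_M s\,\frac{\omega^n}{n!}
\]
is then a strictly positive number. Theorem \ref{mthm1} applies and yields that $M$ is uniruled.

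There is no real obstacle here: the argument is essentially a two-line application of Berger's lemma followed by a citation of Theorem \ref{mthm1}. The only content worth emphasizing is that, although positive holomorphic sectional curvature need not imply positivity of the first Chern class (as Hitchin's Hirzebruch surface examples cited above show), it is strong enough to ensure positivity of this one scalar averaged invariant, which by our main theorem is already enough to force the existence of a rational curve through every point.
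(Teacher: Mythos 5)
Your argument is exactly the paper's: Berger's averaging lemma gives pointwise positivity of the scalar curvature from positive holomorphic sectional curvature, hence positive total scalar curvature, and Theorem \ref{mthm1} then yields uniruledness. Correct and identical in approach.
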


\begin{remark}
It was proven in \cite{heier_lu_wong} in dimension 3 that a K\"ahler manifold with negative holomorphic sectional curvature has negative first Chern class. This is conjectured to be true even in arbitrary dimension and represents a significant distinction between the cases of positive and negative curvature.
\end{remark}
Our second main result is a characterization of projective manifolds with zero total scalar curvature.
\begin{theorem}\label{mthm2}
Let $M$ be a projective manifold with a K\"ahler metric with total scalar curvature equal to zero. Then either $M$ is uniruled or the canonical line bundle of $M$ is a torsion line bundle.
\end{theorem}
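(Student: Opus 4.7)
The plan is to combine the pseudo-effectivity criterion of Boucksom-Demailly-Paun-Peternell (Theorem \ref{bdpp_mthm}) with standard facts about closed positive currents and with Yau's solution of the Calabi conjecture. The ``uniruled'' case of the dichotomy is free, so I would assume that $M$ is not uniruled and aim to show that $K_M$ is torsion.

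Under that assumption, Theorem \ref{bdpp_mthm} forces $c_1(K_M)$ into the pseudo-effective cone, so there exists a closed positive $(1,1)$-current $T$ on $M$ with $[T] = c_1(K_M)$ in $H^{1,1}(M,\RR)$. Setting $n = \dim_{\CC} M$, the total scalar curvature of $\omega$ is a positive universal constant times
\[
c_1(M) \cdot [\omega]^{n-1} = -c_1(K_M) \cdot [\omega]^{n-1},
\]
and so the vanishing hypothesis translates into
\[
0 = c_1(K_M) \cdot [\omega]^{n-1} = \int_M T \wedge \omega^{n-1}.
\]

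Since $T$ is positive and $\omega^{n-1}$ is a strictly positive smooth $(n-1,n-1)$-form, $T \wedge \omega^{n-1}$ is a positive Radon measure of total mass zero, hence identically zero; this forces $T \equiv 0$ as a current, and therefore $c_1(K_M) = [T] = 0$ in $H^2(M,\RR)$, i.e., $K_M$ is numerically trivial. From here I would invoke Yau's solution of the Calabi conjecture to obtain a Ricci-flat K\"ahler metric on $M$, and then the Beauville-Bogomolov decomposition theorem to produce a finite \'etale Galois cover $\pi : \tilde M \to M$ with group $G$ whose total space splits as a product of a complex torus, simply connected Calabi-Yau manifolds, and irreducible hyperk\"ahler manifolds. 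In particular $K_{\tilde M} \cong \OO_{\tilde M}$. Choosing a trivializing section $s$ of $K_{\tilde M} = \pi^* K_M$, each $g \in G$ acts by $g^* s = \chi(g) s$ for a character $\chi : G \to \CC^*$, so $s^{\otimes |G|}$ is $G$-invariant and descends to a nowhere-vanishing section of $K_M^{\otimes |G|}$, proving that $K_M$ is torsion.

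I expect the main obstacle to be the middle step, in which the integral identity $\int_M T \wedge \omega^{n-1} = 0$ must be promoted to the cohomological vanishing of $c_1(K_M)$; this rests on the (standard but essential) fact that a closed positive $(1,1)$-current whose trace measure against a K\"ahler form is zero must itself vanish identically as a current. Once that is in place, the passage from numerical triviality to torsion of $K_M$ is a clean consequence of the classical complex-geometric structure theory for manifolds with vanishing real first Chern class.
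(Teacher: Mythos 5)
Your argument is correct, but it takes a genuinely different route from the paper at both of its key steps. For the passage from zero total scalar curvature to triviality of $c_1(K_M)$, the paper does not argue with a single current against the given $\omega$; instead it first proves a separate statement (Theorem \ref{no_two_signs}) that on a non-uniruled $M$ all K\"ahler metrics have total scalar curvature of the same sign, uses this to deduce $K\cdot H^{n-1}=0$ for every ample $H$ via the Fubini--Study metrics of projective embeddings, and then invokes a Hodge-index-type lemma (Lemma \ref{hit_lemma}) to get numerical triviality. Your shortcut --- that a positive $(1,1)$-current $T$ with $\int_M T\wedge\omega^{n-1}=0$ for one K\"ahler form must vanish, because the trace measure dominates all coefficient measures of a positive current --- is valid (it is standard in Demailly's theory of positive currents, and closedness is not even needed for that step) and is in fact more direct: it bypasses both Theorem \ref{no_two_signs} and the Hodge Index Theorem, and yields the stronger conclusion $c_1(K_M)=0$ in $H^2(M,\RR)$ rather than mere numerical triviality. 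What it does not give you is Theorem \ref{no_two_signs} itself, which the paper wants as a standalone result. For the final step, the paper goes through birational geometry: Kawamata--Nakayama abundance for numerically trivial canonical bundles gives $\kod(M)=0$, and a nonzero section of $\ell K$ with numerically trivial divisor must have empty divisor, so $\ell K$ is trivial. You instead use Yau's theorem plus the Beauville--Bogomolov decomposition and descend a power of the trivializing section along the finite \'etale cover; this is a correct and classical alternative (the paper itself acknowledges in the introduction that Calabi--Yau is equivalent to torsion canonical bundle via \cite{Yau}, \cite{Kawamata_85}, \cite{Nakayama}), trading algebro-geometric abundance results for differential-geometric structure theory. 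Both inputs are heavy but standard; your version is self-contained on the analytic side, while the paper's version keeps the argument within the pseudo-effectivity/abundance circle of ideas it is advertising.
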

A key ingredient in the proof of Theorem \ref{mthm2} will be the following statement on the uniqueness of the sign of the total scalar curvature in the absence of uniruledness. It appears to be new and interesting in its own right, so we state it as another theorem.
\begin{theorem}\label{no_two_signs}
Let $M$ be a projective manifold which is not uniruled. Then the total scalar curvatures of all K\"ahler metrics on $M$ must have the same sign, which is either zero or negative.
\end{theorem}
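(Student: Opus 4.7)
The plan is to translate the total scalar curvature into a cohomological intersection number and then feed this into the duality between the pseudo-effective cone and the movable cone established in \cite{BDPP_04}. A standard pointwise trace computation (using that the Ricci form represents $c_1(M) = -c_1(K_M)$ up to a universal factor of $2\pi$) gives, for any K\"ahler metric $\omega$ on $M$, $\dim_\CC M = n$,
\[
\int_M s(\omega)\,\omega^n \;=\; C_n\, c_1(M)\cdot [\omega]^{n-1} \;=\; -C_n\, K_M \cdot [\omega]^{n-1}
\]
with $C_n>0$ depending only on $n$. So the sign of the total scalar curvature of $\omega$ is the opposite of the sign of the intersection number $K_M \cdot [\omega]^{n-1}$.

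Next, since $M$ is not uniruled, Theorem~\ref{bdpp_mthm} of \cite{BDPP_04} yields that $K_M$ is pseudo-effective; equivalently, $c_1(K_M)$ is represented by a closed positive $(1,1)$-current $T$ on $M$. For any K\"ahler class $[\omega]$, the form $\omega^{n-1}$ is a strictly positive $(n-1,n-1)$-form on $M$, so $T\wedge \omega^{n-1}$ is a non-negative Radon measure and
\[
K_M \cdot [\omega]^{n-1} \;=\; \int_M T \wedge \omega^{n-1} \;\geq\; 0.
\]
This already gives that every K\"ahler metric on $M$ has non-positive total scalar curvature. (Equivalently, one is using that $[\omega]^{n-1}$ sits in the movable cone, which is the dual of the pseudo-effective cone by \cite{BDPP_04}.)

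The remaining task, needed to upgrade ``non-positive'' to ``all zero or all negative,'' is a sign-rigidity step. Suppose some K\"ahler $\omega_0$ has total scalar curvature equal to $0$. Then $\int_M T\wedge \omega_0^{n-1} = 0$, but the integrand is a non-negative measure, so $T\wedge \omega_0^{n-1}=0$ as a measure; this is, up to a factor, the trace measure of the positive current $T$ with respect to the K\"ahler metric $\omega_0$, and vanishing of the trace measure of a positive $(1,1)$-current forces $T=0$. In particular $c_1(K_M)=0$ in $H^{1,1}(M,\RR)$, and then $K_M\cdot [\omega']^{n-1}=0$ for \emph{every} K\"ahler class $[\omega']$, so every K\"ahler metric on $M$ has total scalar curvature zero. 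Combined with the previous paragraph, this yields the desired dichotomy.

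The main obstacle is nothing intrinsic to the argument but rather the invocation of \cite{BDPP_04}: the step that does the real work is the pseudo-effectivity of $K_M$ for a non-uniruled projective manifold. Everything else is a direct application of the positivity of K\"ahler forms, the pairing of currents with smooth forms, and the elementary fact that a closed positive $(1,1)$-current with vanishing trace measure must be the zero current.
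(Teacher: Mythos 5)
Your proof is correct, but the sign-rigidity step takes a genuinely different route from the paper's. The first half (non-uniruledness gives pseudo-effectivity of $K_M$ via \cite{BDPP_04}, hence $\int_M T\wedge\omega^{n-1}\geq 0$ and non-positive total scalar curvature for every K\"ahler metric) is the same as the paper's argument in Section 2. For the rigidity, the paper argues by contradiction with two metrics $g_0$ (total scalar curvature zero) and $g_-$ (negative): it chooses $\varepsilon>0$ small enough that $\omega_0-\varepsilon\omega_-$ is still K\"ahler, factors $\omega_0^{n-1}-(\varepsilon\omega_-)^{n-1}=(\omega_0-\varepsilon\omega_-)\wedge\bigl(\omega_0^{n-2}+\cdots+(\varepsilon\omega_-)^{n-2}\bigr)$ as a sum of decomposable positive $(n-1,n-1)$-forms (Lemma \ref{lin_alg_claim}), deduces $0=\int_M\Ric(g_0)\wedge\omega_0^{n-1}\leq\varepsilon^{n-1}\int_M\Ric(g_-)\wedge\omega_-^{n-1}<0$ (Lemma \ref{int_pos}), a contradiction; this uses only the definition of positivity of currents tested against decomposable forms plus a partition of unity. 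You instead invoke the local structure theory of positive $(1,1)$-currents: vanishing of the non-negative measure $T\wedge\omega_0^{n-1}$ forces $T=0$, because the coefficient measures $T_{j\bar k}$ of a positive current are dominated by its trace measure (Cauchy--Schwarz for the positive semi-definite matrix of measures), and the trace measure with respect to $\omega_0$ is comparable on charts to the Euclidean one --- a point worth stating explicitly. This step is standard (Demailly) but less elementary than anything the paper uses; in exchange you obtain the stronger conclusion $c_1(K_M)=0$ in $H^2(M,\RR)$ immediately, which the paper only extracts later in Section 4 via Kleiman's Hodge-index lemma (Lemma \ref{hit_lemma}), so your route would also streamline the proof of Theorem \ref{mthm2}.
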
\par
In this paper, we take a {\it Calabi-Yau manifold} to be a compact complex manifold admitting a Ricci-flat K\"ahler metric. For a projective manifold $X$, using the results in \cite{Yau}, \cite{Kawamata_85}, \cite{Nakayama}, one can easily show that being Calabi-Yau is equivalent to the canonical line bundle of $X$ being a torsion line bundle. Thus, the conclusion of Theorem \ref{mthm2} can be restated as saying that either $M$ is uniruled or Calabi-Yau.\par
We observe that, in the uniruled case, the sign of the (total) scalar curvature may be arbitrary. The simplest example is that of $\PP^1\times N$, where $N$ is a compact hyperbolic Riemann surface and both manifolds come with their standard constant scalar curvature K\"ahler metrics. After multiplying one of the factors in the product metric with an appropriate positive number, the scalar curvature can be made to have an arbitrary sign.\par
We also state the following theorem, which is stronger than Theorem \ref{no_two_signs}. It simply follows from combining the statements of Theorem \ref{mthm2} and \ref{no_two_signs}, but it may be of particular interest to readers focussed on complex differential geometry.
\begin{theorem}\label{CY}
Let $M$ be a projective manifold which is not uniruled. Then either all K\"ahler metrics on $M$ have negative total scalar curvature or $M$ is Calabi-Yau.
\end{theorem}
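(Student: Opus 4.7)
The plan is to deduce this immediately from Theorems \ref{mthm2} and \ref{no_two_signs}, as the author hints in the paragraph just preceding the statement. Assuming $M$ is a projective manifold which is not uniruled, I would first invoke Theorem \ref{no_two_signs} to conclude that every K\"ahler metric on $M$ has total scalar curvature of a common sign, which is either zero or negative. This splits the argument into exactly two cases.

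If the common sign is strictly negative, then every K\"ahler metric on $M$ has negative total scalar curvature, and the first alternative of the conclusion holds with nothing more to prove. Otherwise the common sign is zero, so in particular there exists a K\"ahler metric on $M$ whose total scalar curvature vanishes. Applying Theorem \ref{mthm2} to this metric yields that $M$ is either uniruled or has torsion canonical line bundle; the first possibility is excluded by hypothesis, so $K_M$ is torsion. As noted in the paragraph following Theorem \ref{no_two_signs}, the combined results of Yau, Kawamata and Nakayama identify projective manifolds with torsion canonical bundle with Calabi-Yau manifolds in the sense used in this paper, giving the second alternative.

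Since the entire argument is a formal chaining of two previously established theorems with a standard equivalence for Calabi-Yau, there is no genuine obstacle to address; the substantive content lies entirely in Theorems \ref{mthm2} and \ref{no_two_signs} themselves.
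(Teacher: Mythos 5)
Your proposal is correct and follows exactly the route the paper intends: the paper itself states that Theorem \ref{CY} "simply follows from combining the statements of Theorem \ref{mthm2} and \ref{no_two_signs}," and your case split on the common sign, followed by the torsion-implies-Calabi-Yau identification, is precisely that combination.
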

Since a compact K\"ahler manifold with negative holomorphic sectional curvature cannot be Calabi-Yau due to \cite[Theorem 2.3]{heier_lu_wong} and does not admit any rational curves, the following corollary is immediate from Theorem \ref{CY}.
\begin{corollary}
Let $M$ be a projective manifold with a K\"ahler metric with negative holomorphic sectional curvature. Then the total scalar curvature of any K\"ahler metric on $M$ is negative. 
\end{corollary}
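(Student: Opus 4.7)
The plan is to deduce this corollary directly from Theorem \ref{CY}, so the work reduces to verifying its two hypotheses: that $M$ is not uniruled and that $M$ is not Calabi-Yau. Both of these are already flagged in the sentence preceding the corollary, but let me lay out how I would justify them.

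First, I would argue that $M$ cannot be uniruled. If $M$ admitted a rational curve, i.e.\ a non-constant holomorphic map $f\colon \PP^1 \to M$, then at any regular point of $f$ the pullback of the K\"ahler metric would be a (possibly degenerate) K\"ahler metric on $\PP^1$ whose Gaussian curvature is bounded above by the holomorphic sectional curvature of $M$ in the tangent direction $df(T\PP^1)$, hence is strictly negative. Integrating over $\PP^1$ would then produce a negative total curvature, contradicting the Gauss-Bonnet formula, which forces $\int_{\PP^1} K\, dA = 4\pi > 0$. Hence $M$ contains no rational curves at all, so in particular is not uniruled.

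Second, I would invoke \cite[Theorem 2.3]{heier_lu_wong}, which (as cited in the excerpt) asserts that a compact K\"ahler manifold with negative holomorphic sectional curvature cannot be Calabi-Yau. Thus the canonical bundle of $M$ is not torsion.

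With these two facts in hand, Theorem \ref{CY} leaves only one possibility: every K\"ahler metric on $M$ has negative total scalar curvature. The main conceptual work has already been done in Theorems \ref{mthm2} and \ref{no_two_signs}; the only substantive step here is the non-uniruledness argument above, which is the standard Schwarz-type obstruction to rational curves under negative holomorphic sectional curvature and is not expected to present any real obstacle.
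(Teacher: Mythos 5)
Your proposal is correct and follows essentially the same route as the paper, which likewise deduces the corollary from Theorem \ref{CY} by noting that a compact K\"ahler manifold of negative holomorphic sectional curvature admits no rational curves and cannot be Calabi-Yau by \cite[Theorem 2.3]{heier_lu_wong}. The paper simply asserts the absence of rational curves as standard, whereas you sketch the Gauss--Bonnet/Schwarz-type justification (which, to be fully rigorous, would also need to account for the zeros of $df$); this is a minor elaboration, not a different approach.
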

Finally, we would like to point out that it is commonly expected that a projective manifold with pseudo-effective canonical line bundle has non-negative Kodaira dimension (see \cite[p.~1]{BDPP_04}). If a proof of this open problem were known, most of our results would become easy to prove using the method in \cite{Yau_74} relating scalar curvature and the Poincar\'e-Lelong formula (see page \pageref{PL_formula}). Namely, due to \cite{BDPP_04}, non-uniruledness could then be directly translated to the existence of a non-zero pluricanonical section. For a very recent development concerning such problems of abundance conjecture-type, we refer to the manuscript \cite{siu} by Siu.

\section{The case of positive total scalar curvature}
In this section, we prove Theorem \ref{mthm1}. We begin by first recalling some standard notions and notations (see \cite{Dem}, \cite{BDPP_04}, \cite{debarre_book}).\par

A current $T$ of bidegree $(1,1)$ on a complex manifold of dimension $n$ is {\it positive} if for every choice of smooth $(1,0)$ forms $a_1,\ldots,a_{n-1}$, the distribution
$${\sqrt{-1}}^{n-1}T\wedge a_1\wedge \bar a_1  \wedge \ldots \wedge a_{n-1}\wedge \bar a_{n-1}$$
is a positive measure.\par

A line bundle $F$ on a projective manifold $X$ is {\it pseudo-effective} if its first Chern class $c_1(F)$ is in the closed cone in $H^{1,1}(X)$ generated by the classes of effective divisors. For our purposes, we think of pseudo-effective line bundles in terms of an equivalent differential-geometric characterization: A line bundle $F$ on a projective manifold $X$ is {\it pseudo-effective} if it carries a singular hermitian metric $h$, locally given by $h=e^{-\varphi}$ with $\varphi\in L^1_{\text{loc}}$, such that its (globally well-defined) curvature current 
$$ \Theta(h)=\sqrt{-1}\partial\bar\partial \varphi$$ is a positive current of bidegree $(1,1)$. \par

We note also that, under the canonical isomorphism mapping the usual Dolbeault cohomology with coefficients in $F$ to the corresponding cohomology for currents, the class of $\Theta(h)$ corresponds to the class $2\pi c_1(F)$, where $c_1(F)$ is the first Chern class of $F$.\par

Furthermore, for a K\"ahler metric $g=\sum_{i,j=1}^n g_{i\bar{j}} dz_i\otimes d\bar{z}_{j}$, its {\it Ricci curvature form} is given by
\begin{equation*}
\Ric(g)=-\sqrt{-1}\partial\bar\partial \log\det(g_{i\bar{j}}).
\end{equation*}
By a result of Chern, the class of the form $\frac{1}{2\pi}\Ric(g)$ is equal to $c_1(M)=c_1(-K)$, where $K$ is the canonical line bundle of $M$. The {\it scalar curvature} $s$ of $g$ is defined to be the trace of $-\sqrt{-1}\Ric(g)$ with respect to a unitary frame. Finally, the {\it total scalar curvature} of $g$ is defined to be
$$\int_M s\frac{\omega^n}{n!},$$
where $\omega=\frac{\sqrt{-1}}{2}\sum_{i,j=1}^n g_{i\bar{j}} dz_i\wedge d\bar{z}_{j}$ is the {\it K\"ahler form associated to $g$}.\par
To prove Theorem \ref{mthm1}, let $g$ now denote the K\"ahler metric on $M$ with positive total scalar curvature. It follows from linear algebra and the definition of scalar curvature that
$$\Ric (g)\wedge \omega^{n-1} = \frac 2 n s\, \omega^n,$$
where $s$ is the scalar curvature of $g$. \par
Now, assume that $M$ is not uniruled. By \cite[Corollary 0.3]{BDPP_04}, the canonical line bundle $K$ of $M$ is pseudo-effective, i.e., it carries a singular hermitian metric $h$ whose curvature current $\Theta(h)$ is a positive current of bidegree $(1,1)$. Since both $\Theta(h)$ and $-\Ric (g)$ represent $2\pi c_1(K)$, we have
\begin{align}
~&\ \int_M \Theta(h)\wedge \omega^{n-1}\label{1}\\
=&\ -\int _M \Ric (g)\wedge \omega^{n-1}\nonumber\\
=&\ -\int_M  \frac 2 n s\, \omega^n\nonumber\\
<&\ 0,\nonumber
\end{align}
where the last inequality is due to the positivity of the total scalar curvature.
On the other hand, the expression in \eqref{1} is non-negative, which yields a contradiction. The reason for the non-negativity is the positivity of $\Theta(h)$ and the fact that $\omega\wedge\ldots\wedge \omega$ can be written as a global sum of the form $\sum_{k=1}^{N} {\sqrt{-1}}^{n-1}a^{(k)}_1\wedge \bar a^{(k)}_1  \wedge \ldots \wedge a^{(k)}_{n-1}\wedge \bar a^{(k)}_{n-1},$ where the $a^{(k)}_i$, $k=1,\ldots,N$, $i=1,\ldots,n-1$, are globally defined smooth $(1,0)$ forms. To justify this, we notice that every hermitian form can locally be represented in terms of a {\it coframe} of smooth $(1,0)$ forms $\varphi_1,\ldots,\varphi_n$ as a sum $\sqrt{-1}\sum_{k=1}^n \varphi_k\wedge \bar \varphi_k$. The globalization is then achieved by means of a partition of unity. For complete details, we refer the reader to the proofs of Lemma \ref{lin_alg_claim} and Lemma \ref{int_pos}, which contain identical arguments.

\section{Uniqueness of the curvature sign in the absence of uniruledness}
To prepare for the proof of Theorem \ref{mthm2} in the next section, we now prove Theorem \ref{no_two_signs}. According to Theorem \ref{mthm1}, positive total scalar curvature implies uniruledness. Thus, in order to derive a contradiction, we may assume that there are two K\"ahler metrics on $M$ such that one has zero total scalar curvature and one has negative total scalar curvature. We denote these K\"ahler metrics and their K\"ahler forms with $g_0,\omega_0$ and $g_-,\omega_-$, respectively.\par
Since $M$ is compact, we can choose a sufficiently small positive number $\varepsilon$ such that $\omega=\omega_0-\varepsilon\omega_-$ is still a K\"ahler form. Also, let $n=\dim M$, let
\begin{equation*}
A=\omega^{n-1}_0,
\end{equation*}
and
\begin{equation*}
B=(\varepsilon\omega_-)^{n-1}=\varepsilon^{n-1}\omega_-^{n-1}.
\end{equation*}
\begin{lemma}\label{lin_alg_claim} There exists a finite open covering $(U_\nu)$ of $M$ and positive integers $N_\nu$ such that for every $\nu$ and $k_\nu\in \{1,\ldots,N_\nu\}$, there exist smooth $(1,0)$ forms $a^{(\nu,k_\nu)}_1,\ldots,a^{(\nu,k_\nu)}_{n-1}$ on $U_\nu$ such that, on $U_\nu$, we have
$$A-B={\sqrt{-1}}^{n-1} \sum_{k_\nu=1}^{N_\nu} a^{(\nu,k_\nu)}_1\wedge \bar a^{(\nu,k_\nu)}_1  \wedge \ldots \wedge a^{(\nu,k_\nu)}_{n-1}\wedge \bar a^{(\nu,k_\nu)}_{n-1}.$$
\end{lemma}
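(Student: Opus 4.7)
The plan is to combine a polynomial factorization of $A-B$ with a local sum-of-squares representation of each Kähler form, then expand.

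First, I would use the fact that $(1,1)$-forms have even total degree and therefore commute in the wedge algebra. The standard identity $a^{n-1}-b^{n-1}=(a-b)\sum_{j=0}^{n-2}a^{n-2-j}b^j$ then applies with $a=\omega_0$ and $b=\varepsilon\omega_-$, yielding
\begin{equation*}
A-B=\omega\wedge\sum_{j=0}^{n-2}\omega_0^{n-2-j}\wedge(\varepsilon\omega_-)^j.
\end{equation*}
Each summand is an $(n-1)$-fold wedge of Kähler forms drawn from the set $\{\omega_0,\omega,\varepsilon\omega_-\}$, all three of which are smooth positive definite Hermitian $(1,1)$-forms (the middle one is Kähler by the choice of $\varepsilon$ at the start of the section).

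Next, I would pick a finite open covering $(U_\nu)$ of $M$ by coordinate charts with smooth $(1,0)$-coframes $\varphi^{(\nu)}_1,\ldots,\varphi^{(\nu)}_n$. On each chart, a smoothly positive definite Hermitian $(1,1)$-form is represented as $\sqrt{-1}\sum_{i,j}H_{ij}\varphi^{(\nu)}_i\wedge\bar\varphi^{(\nu)}_j$ with $H=(H_{ij})$ a smoothly positive definite Hermitian matrix. Taking the principal Hermitian square root $R=\sqrt{H}$ and setting $\alpha_k=\sum_i\overline{R_{ki}}\varphi^{(\nu)}_i$ produces a decomposition $\sqrt{-1}\sum_{k=1}^n\alpha_k\wedge\bar\alpha_k$ with smooth $(1,0)$-forms $\alpha_k$. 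Applying this separately to $\omega_0$, $\omega$, and $\varepsilon\omega_-$ yields three collections of smooth $(1,0)$-forms on $U_\nu$.

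Finally, I would substitute these representations into each summand $\omega\wedge\omega_0^{n-2-j}\wedge(\varepsilon\omega_-)^j$ on $U_\nu$ and distribute the wedge product over the three inner sums. Since each elementary factor $\sqrt{-1}\,\beta\wedge\bar\beta$ is a $(1,1)$-form and $(1,1)$-forms commute in the wedge product, no sign issues arise when ordering the pairs into the canonical shape; the scalar factors collect into a single $\sqrt{-1}^{n-1}$ in front. Each resulting elementary term is therefore of the required form $\sqrt{-1}^{n-1}a_1\wedge\bar a_1\wedge\ldots\wedge a_{n-1}\wedge\bar a_{n-1}$. Summing over $j=0,\ldots,n-2$ and over all such terms gives the lemma, with $N_\nu$ equal to the total number of terms produced on $U_\nu$.

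The only step that is not pure bookkeeping is the smoothness assertion in the second paragraph: one must know that the principal square root of a smoothly varying positive definite Hermitian matrix-valued function is smooth. This is standard, and can be seen either via the holomorphic functional calculus applied to a small contour enclosing the (bounded-below-by-a-positive-constant) spectrum, or equivalently via Gram–Schmidt orthogonalization of a starting coframe with respect to the given Hermitian form, as hinted at in the paper's previous section. Once this is granted, the rest is algebraic manipulation of finite sums of $(1,1)$-forms.
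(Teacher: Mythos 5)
Your argument is essentially the paper's own proof: the same factorization $a^{n-1}-b^{n-1}=(a-b)\sum a^{n-2-j}b^{j}$ applied to $\omega_0$ and $\varepsilon\omega_-$, followed by local coframe (sum-of-squares) representations of the positive $(1,1)$-forms $\omega_0-\varepsilon\omega_-$, $\omega_0$, $\varepsilon\omega_-$ and expansion of the wedge products. You supply more detail than the paper on why the coframe exists and varies smoothly (Hermitian square root / Gram--Schmidt), but the route is the same and the proof is correct.
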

\begin{proof} In the case $n=2$, the statement of the lemma is true simply because $A-B=\omega_0-\varepsilon \omega_-$ is a hermitian form. Namely, it is well-known that on a small open set, there exists a {\it coframe} of smooth $(1,0)$ forms $\varphi_1,\ldots,\varphi_n$ such that 
\begin{equation}\label{coframe}
A-B=\sqrt{-1}\sum_{k=1}^n \varphi_k\wedge \bar \varphi_k.
\end{equation}
This proves the lemma in the case $n=2$. We now assume $n\geq 3$.\par
For real numbers $a,b$, it is a well-known fact that
$$a^{n-1}-b^{n-1}=(a-b)(a^{n-2}+a^{n-3}b+\ldots + b^{n-2}).$$
Because $\omega_0,\omega_-$ are each $(1,1)$-forms, we have
$$\omega_0\wedge\omega_-=\omega_-\wedge\omega_0,$$
and the same identity thus holds in our situation:
\begin{align*} 
A-B & = \omega_0^{n-1}- (\varepsilon\omega_-)^{n-1}\\
&=(\omega_0-\varepsilon \omega_-)\wedge(\omega_0^{n-2}+(\omega_0^{n-3})\wedge(\varepsilon\omega_-)+\ldots+(\varepsilon\omega_-)^{n-2}).
\end{align*}
Now note that $\omega_0-\varepsilon \omega_-, \omega_0,\omega_-$ locally all have representations in terms of a coframe as on the right hand side of \eqref{coframe}. Substituting these representations into the second line of the above display yields the expression whose existence is claimed in the lemma.
\end{proof}
We continue with a proof of the following inequality.
\begin{lemma}\label{int_pos}
For $g_0$ and $A,B$ as above, the following holds.
\begin{equation}\label{ineq}
-\int _M \Ric (g_0)\wedge (A-B)\geq 0.\\
\end{equation}
\end{lemma}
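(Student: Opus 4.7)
The plan hinges on invoking the theorem of Boucksom-Demailly-Paun-Peternell under the standing assumption that $M$ is not uniruled. This yields that the canonical line bundle $K$ of $M$ is pseudo-effective, so $K$ admits a singular hermitian metric $h$ whose curvature current $\Theta(h)$ is a positive $(1,1)$-current. Since both $\Theta(h)$ and $-\Ric(g_0)$ represent the cohomology class $2\pi c_1(K)$, and since $A - B$ is a closed smooth $(n-1,n-1)$-form (both $\omega_0^{n-1}$ and $(\varepsilon\omega_-)^{n-1}$ being closed powers of K\"ahler forms), Stokes' theorem in the sense of currents gives
$$-\int_M \Ric(g_0) \wedge (A-B) \;=\; \int_M \Theta(h) \wedge (A-B),$$
where the right-hand side is interpreted as the pairing of the current $\Theta(h)$ with the smooth test form $A-B$.

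To show that this pairing is non-negative, I would globalize the local representation supplied by Lemma \ref{lin_alg_claim}. Choose a smooth partition of unity $(\rho_\nu)$ subordinate to the finite cover $(U_\nu)$ and set $b^{(\nu,k_\nu)}_i := \sqrt{\rho_\nu}\, a^{(\nu,k_\nu)}_i$, extended by zero outside $U_\nu$. Each $b^{(\nu,k_\nu)}_i$ is then a globally defined smooth $(1,0)$-form on $M$, and absorbing the factors of $\rho_\nu$ into the exterior product yields the global identity
$$A - B \;=\; (\sqrt{-1})^{n-1} \sum_{\nu,\,k_\nu} b^{(\nu,k_\nu)}_1 \wedge \bar b^{(\nu,k_\nu)}_1 \wedge \cdots \wedge b^{(\nu,k_\nu)}_{n-1} \wedge \bar b^{(\nu,k_\nu)}_{n-1}.$$
By the very definition of a positive $(1,1)$-current recalled at the start of Section 2, each summand of $\Theta(h) \wedge (A-B)$ is a positive measure on $M$, so the full integral is non-negative and the lemma follows.

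The main obstacle is the cohomological replacement step in the first paragraph. Since $\Theta(h)$ is in general only a current (and indeed a highly singular one, with $\varphi \in L^1_{\text{loc}}$ only), one cannot justify the equality of integrals by a naive pointwise computation. Instead, one relies on the fact that the difference $\Theta(h) - (-\Ric(g_0))$ is $\partial\bar\partial$-exact and that $A - B$ is $d$-closed; Stokes' theorem applied in the standard framework of currents (as in Demailly \cite{Dem}) then kills the exact part. Once this identification is in place, the remaining work---local coframe representation, partition-of-unity gluing, and termwise positivity---is a direct reprise of the argument used at the end of the proof of Theorem \ref{mthm1}.
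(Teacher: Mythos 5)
Your proposal is correct and follows essentially the same route as the paper: pseudo-effectivity of $K$ via Boucksom--Demailly--Paun--Peternell, the cohomological replacement of $-\Ric(g_0)$ by the positive current $\Theta(h)$, and the globalization of Lemma \ref{lin_alg_claim} by absorbing $\sqrt{\chi_\nu}$ into the $(1,0)$-forms to obtain termwise positivity. If anything, your ordering---pairing $\Theta(h)$ against the closed form $A-B$ first and only then decomposing---is slightly cleaner than the paper's, which asserts the positivity inequality \eqref{pos} directly for $-\Ric(g_0)$ against individual (non-closed) simple forms, a step that is strictly speaking only justified after passing to $\Theta(h)$ exactly as you do.
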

\begin{proof}
By Chern's result, $-\Ric (g_0)$ represents $2\pi c_1(K)$. Due to the pseudoeffectivity of $K$, we have 
\begin{equation}\label{pos}
- {\sqrt{-1}}^{n-1}\int _M \Ric (g_0) \wedge a_1\wedge \bar a_1  \wedge \ldots \wedge a_{n-1}\wedge \bar a_{n-1}\geq 0
\end{equation}
for smooth $(1,0)$ forms $a_1,\ldots,a_{n-1}$ on $M$.\par
Let $(\chi_\nu)$ be a partition of unity subordinate to the covering $U_\nu$. If $a^{(\nu,k_\nu)}_1\wedge \bar a^{(\nu,k_\nu)}_1  \wedge \ldots \wedge a^{(\nu,k_\nu)}_{n-1}\wedge \bar a^{(\nu,k_\nu)}_{n-1}$ is one of the forms arising in Lemma \ref{lin_alg_claim}, we can turn it into a globally defined form simply by multiplying it with $\chi_\nu$. 
It is immediate that for all $\nu$ and $k_\nu$,
\begin{equation}\label{pos_w_chi}
- {\sqrt{-1}}^{n-1}\int _M \Ric (g_0) \wedge (\chi_\nu a^{(\nu,k_\nu)}_1\wedge \bar a^{(\nu,k_\nu)}_1  \wedge \ldots \wedge a^{(\nu,k_\nu)}_{n-1}\wedge \bar a^{(\nu,k_\nu)}_{n-1})\geq 0,
 \end{equation}
because on $U_\nu$, we can replace $a^{(\nu,k_\nu)}_1$ by $\sqrt{\chi_\nu}a^{(\nu,k_\nu)}_1$ and apply \eqref{pos}.\par
Furthermore, by Lemma \ref{lin_alg_claim},
\begin{gather*} 
-\int _M \Ric (g_0)\wedge (A-B)=\\
-\int _M \Ric (g_0)\wedge ( \sum_\nu {\sqrt{-1}}^{n-1} \chi_\nu\sum_{k_\nu=1}^{N_\nu} a^{(\nu,k_\nu)}_1\wedge \bar a^{(\nu,k_\nu)}_1  \wedge \ldots \wedge a^{(\nu,k_\nu)}_{n-1}\wedge \bar a^{(\nu,k_\nu)}_{n-1})=\\
\sum_\nu  \sum_{k_\nu=1}^{N_\nu}-{\sqrt{-1}}^{n-1}\int _M \Ric (g_0)\wedge (\chi_\nu a^{(\nu,k_\nu)}_1\wedge \bar a^{(\nu,k_\nu)}_1  \wedge \ldots \wedge a^{(\nu,k_\nu)}_{n-1}\wedge \bar a^{(\nu,k_\nu)}_{n-1}).
\end{gather*}
Due to \eqref{pos_w_chi}, the summation in the last line of the above display only contains non-negative numbers, which concludes the proof.
\end{proof}
To finish the proof of Theorem \ref{no_two_signs}, we now obtain a contradiction as follows. Note that the last equality in the display below is due to the fact that $-\Ric (g_-)$ also represents $2\pi c_1(K)$, and that the inequality is due to Lemma \ref{int_pos}.
\begin{align*}
0&=\int _M \Ric (g_0)\wedge A\\
&\leq \int_M \Ric (g_0)\wedge B\\
&= \int_M \Ric (g_-)\wedge B.\\
\end{align*}
The value of the last line in the above display is a positive constant times the total scalar curvature of $g_-$, which yields the desired contradiction.
\section{The case of zero total scalar curvature}
In this section, we conduct the proof of Theorem \ref{mthm2} by assuming that $M$ is not uniruled and then showing that the canonical line bundle of $M$ is torsion. To this end, we will use the following well-known lemma (see \cite[Proposition 3]{Kleiman}, \cite[3.8]{debarre_book}) based on the Hodge Index Theorem.
\begin{lemma}\label{hit_lemma}
Let $X$ be a smooth complex projective variety of dimension $n$. Let $D$ be a divisor on $X$ such that $D\cdot H^{n-1}=0$ for all ample divisors $H$. Then $D$ is numerically trivial.
\end{lemma}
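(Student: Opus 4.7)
The plan is to reduce the statement to the higher-dimensional Hodge Index Theorem, which asserts that for an ample divisor $H$ on $X$ and a divisor $D$ satisfying $D\cdot H^{n-1}=0$, one has $D^{2}\cdot H^{n-2}\leq 0$, with equality if and only if $D$ is numerically trivial. Thus it suffices to exhibit a single ample class $H$ for which $D^{2}\cdot H^{n-2}=0$.

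The key step is a perturbation argument that upgrades the hypothesis (which only mentions $(n-1)$-fold self-intersections of a single ample class) to information about mixed triple intersections. Fix any ample divisor $H$ on $X$. For an arbitrary divisor $D'$ on $X$, the class $mH+D'$ lies in the (open) ample cone for every sufficiently large positive integer $m$, so by hypothesis
\begin{equation*}
0 \;=\; D\cdot(mH+D')^{n-1} \;=\; \sum_{k=0}^{n-1}\binom{n-1}{k}\,m^{n-1-k}\,D\cdot H^{n-1-k}\cdot(D')^{k}.
\end{equation*}
This is a polynomial identity in $m$ that holds for infinitely many values of $m$, so every coefficient must vanish. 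Reading off the coefficient of $m^{n-2}$ yields
\begin{equation*}
D\cdot H^{n-2}\cdot D' \;=\; 0
\end{equation*}
for every divisor $D'$ on $X$.

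Specializing to $D'=D$ gives $D^{2}\cdot H^{n-2}=0$. Combined with the hypothesis $D\cdot H^{n-1}=0$, the equality case of the Hodge Index Theorem in dimension $n$ forces $D$ to be numerically trivial, completing the proof. The only non-elementary ingredient is the higher-dimensional Hodge Index Theorem itself, which would be the main obstacle if one were to prove the lemma from scratch; however, it is classical and is recorded in the references cited alongside the lemma, so the argument above is essentially a formal consequence.
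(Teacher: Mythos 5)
The paper does not actually prove this lemma: it is quoted as a well-known fact, with pointers to Kleiman and to Debarre's book, so there is no in-paper argument to compare against. Your proof is correct and is essentially the standard argument from those sources. The perturbation step is sound: $mH+D'$ is an integral divisor whose class lies in the open ample cone for all $m\gg 0$, the displayed expression is a polynomial in $m$ vanishing at infinitely many integers, and extracting the coefficient of $m^{n-2}$ gives $D\cdot H^{n-2}\cdot D'=0$ for every divisor $D'$. (For $n=2$ this already reads $D\cdot D'=0$ for all $D'$, hence $D$ is numerically trivial with no further input, since every curve on a surface is a divisor.) For $n\geq 3$, the one ingredient worth stating precisely is the version of the Hodge Index Theorem you invoke: the pairing $q_H(\alpha,\beta)=\alpha\cdot\beta\cdot H^{n-2}$ on $N^1(X)_{\RR}$ has signature $(1,\rho-1)$, hence is negative definite on the hyperplane $\{\alpha:\alpha\cdot H^{n-1}=0\}$; your two identities $D\cdot H^{n-1}=0$ and $D^2\cdot H^{n-2}=0$ then force $D=0$ in $N^1(X)_{\RR}$, i.e., $D$ is numerically trivial. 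Equivalently, your intermediate identity $D\cdot H^{n-2}\cdot D'=0$ for all $D'$ already places $D$ in the radical of $q_H$, which is zero by the same nondegeneracy, so the specialization $D'=D$ is not even needed. Either phrasing is fine; as you note, the only non-elementary input is the signature statement, and the lemma is a formal consequence of it.
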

To apply the lemma, take an arbitrary ample divisor on $M$. By Kodaira's embedding theorem, $mH$ is very ample for a sufficiently large positive integer $m$. Let $g$ be the K\"ahler metric that is obtained by restriction of the Fubini-Study metric on projective space after the embedding of $M$ furnished by $mH$. If $\omega$ is the  K\"ahler form associated to $g$, then 
\begin{align*}
~&\ m^{n-1}K\cdot H^{n-1}\\
=&\ -\frac 1 {2\pi}\int _M \Ric (g)\wedge \omega^{n-1}\\
=&\ -\frac 1 {2\pi}\int_M  \frac 2 n s\, \omega^n\\
=&\ 0,
\end{align*}
where the last equality is due to the uniqueness statement in Theorem \ref{no_two_signs} and the assumed existence of {\it some} K\"ahler metric of total scalar curvature equal to zero. By Lemma \ref{hit_lemma}, we can conclude that $K$ is numerically trivial.\par
Moreover, it is a result towards the abundance conjecture due to Kawamata \cite{Kawamata_85} (in the case of minimal varieties) and Nakayama \cite{Nakayama} (in the general case) that in this situation the Kodaira dimension of $M$ is equal to zero, i.e., there exists a positive integer $\ell$ such that $H^0(M,\ell K)$ has a nonzero element $\sigma$. Since $\ell K$ is also numerically trivial, $\divisor (\sigma)=\emptyset $, which implies that $\ell K$ is the trivial line bundle. In other words, $K$ is torsion, q.e.d.\par
We conclude this section by giving an alternative proof of Theorem \ref{mthm2} without using the Hodge Index Theorem in the case when $M$ is a fourfold. It is based on the following special case of the abundance conjecture from \cite[Theorem 9.8]{BDPP_04}.
\begin{theorem}\label{4d_abund}
Let $X$ be a smooth projective fourfold. If $K_X$ is pseudo-effective and if there is a strongly connecting family $(C_t)$ of curves such that $K_X\cdot C_t$=0, then $\kod(X) = 0$.
\end{theorem}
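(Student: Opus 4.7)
The plan is to reduce the fourfold abundance statement, via the BDPP cone duality, to the abundance theorem for pseudo-effective canonical classes of vanishing numerical dimension, and then to bring in the four-dimensional MMP to conclude. The target invariant to control is Nakayama's $\nu_\sigma(K_X)$: once this is pinned to $0$, a combination of results already cited in the paper (Kawamata for minimal varieties, Nakayama in general) will push us from numerical triviality to $\kappa(X)=0$.

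The first step, which is geometric, is to show that a strongly connecting family $(C_t)$ produces a numerical class $[C_t]$ lying in the interior of the movable cone of curves $\mathrm{MCE}(X)$. By definition such a family joins two general points of $X$ by curves from the family, so $[C_t]$ pairs strictly positively with every big divisor class; since $\mathrm{MCE}(X)$ is by definition the closure of the cone generated by strongly movable curves, this interiority is essentially tautological. Combining this with the BDPP duality theorem $\mathrm{Psef}(X)^\vee = \mathrm{MCE}(X)$ (the same Theorem~2.2 of \cite{BDPP_04} that the paper already leans on), the hypothesis $K_X \in \mathrm{Psef}(X)$ together with $K_X\cdot [C_t]=0$ puts $K_X$ on the extreme boundary of $\mathrm{Psef}(X)$ facing an interior class of its dual cone.

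The second step is the core analytic input: I would use Boucksom's divisorial Zariski decomposition $K_X = P_\sigma(K_X)+N_\sigma(K_X)$ to argue that $P_\sigma(K_X)\cdot [C_t]>0$ unless $P_\sigma(K_X)\equiv 0$. This is because the positive part $P_\sigma$ lies in the closure of the big cone, and a big class evaluates positively on every interior movable class. Hence $P_\sigma(K_X)\equiv 0$, i.e., $\nu_\sigma(K_X)=0$, and $K_X$ is numerically equivalent to its effective negative part $N_\sigma(K_X)$. Applying Nakayama's theorem that pseudo-effective classes with $\nu_\sigma = 0$ are themselves represented, up to numerical equivalence, by the exceptional locus of $N_\sigma$, and running the fourfold MMP (available by Birkar--Cascini--Hacon--McKernan) to contract this negative part, we reach a minimal model where $K$ is numerically trivial. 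Kawamata's abundance theorem then gives $\kappa = 0$, as claimed.

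The hard part will be the second step, namely justifying $P_\sigma(K_X)\cdot [C_t]>0$ whenever $P_\sigma(K_X)\not\equiv 0$. On the level of intuition this is immediate, but formally it requires either the characterization of the interior of $\mathrm{MCE}(X)$ as the cone of classes strictly positive on all nonzero classes of $\overline{\mathrm{Big}}\setminus\{0\}$, or a direct computation using modifications that realize the positive part as a nef class on a higher birational model; both arguments use nontrivial analytic positivity theory from \cite{BDPP_04}. The MMP reduction in step three is also nontrivial in the four-dimensional setting, but it is essentially standard given BCHM, which is the main reason the statement as written is restricted to fourfolds.
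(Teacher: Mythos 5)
First, a remark on scope: the paper does not prove this statement at all --- it is quoted verbatim as \cite[Theorem 9.8]{BDPP_04} and used as a black box in the alternative, fourfold-only proof of Theorem \ref{mthm2}. So what you are proposing is in effect a reproof of a theorem of Boucksom--Demailly--Paun--Peternell, and it has to be judged on its own merits rather than against an in-paper argument.

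There is a genuine gap, and it sits exactly where you locate ``the hard part.'' Your argument hinges on the claim that the class $[C_t]$ of a strongly connecting family lies in the \emph{interior} of the movable cone of curves, which you call ``essentially tautological.'' It is neither tautological nor true. The justification you give --- that $[C_t]$ pairs strictly positively with every big divisor class --- only shows that $[C_t]$ is a \emph{nonzero element} of the dual cone $\mathrm{Psef}(X)^\vee$: any nonzero element of a dual cone pairs strictly positively with the interior of the primal cone. Interiority of $[C_t]$ is the dual statement, namely strict positivity against every nonzero class of $\mathrm{Psef}(X)$ including boundary classes, and this fails in general. For a concrete fourfold counterexample, let $X=\mathrm{Bl}_p Y$ be the blow-up of a smooth projective fourfold at a point, with exceptional divisor $E$, and let $(C_t)$ be the strict transforms of irreducible complete-intersection curves of very ample divisors on $Y$ joining pairs of general points and avoiding $p$; this family is strongly connecting, yet $E\cdot C_t=0$ with $E$ a nonzero pseudo-effective class, so $[C_t]$ sits on the boundary of the movable cone. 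Consequently your second step collapses: from $P_\sigma(K_X)\cdot[C_t]=0$ and $P_\sigma(K_X)$ pseudo-effective one cannot conclude $P_\sigma(K_X)\equiv 0$ (the class $E$ above is precisely a counterexample to the implication ``pseudo-effective, orthogonal to a connecting family, hence numerically trivial''). What must be exploited is that $P_\sigma(K_X)$ is \emph{modified nef}, not merely in the closure of the big cone, and converting ``modified nef and orthogonal to a strongly connecting family'' into ``numerically trivial'' is exactly the nontrivial content of BDPP's Section 9, carried out via the nef reduction and numerically trivial fibrations; it is also where the restriction to fourfolds enters their argument. A telling symptom that the decisive difficulty has been bypassed rather than solved: if your steps 1--2 were correct as written, then Nakayama's theorem that $\nu_\sigma(K_X)=0$ implies $\kappa(X)=0$ (valid in every dimension, with no MMP input) would finish the proof in arbitrary dimension, making both the appeal to BCHM and the fourfold hypothesis superfluous.
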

Recall from \cite[p.\ 2]{BDPP_04} that a strongly connecting family $(C_t)$ of curves is defined to be such that any two sufficiently general points can be joined by a chain of irreducible $C_t$'s.\par 
Let $g$ be the K\"ahler metric on $M$ induced by the Fubini-Study metric on the ambient projective space. Again, we denote by $\omega$ the corresponding K\"ahler form. The intersections of $M$ with codimension $3$ planes in projective space yield a strongly connecting family $(C_t)$ of curves due to Bertini's theorem. To determine the intersection number of a curve $C_t$ with $K$, observe that, using again the fact that $-\frac {1}{2\pi}\Ric (g)$ represents $c_1(K)$,
\begin{align*}
K\cdot C_t&=-\frac {1}{2\pi}\int _M \Ric (g)\wedge \omega^3.\\
\end{align*}
However, the value of the right hand side of the above display is, up to a non-zero multiplicative constant, the total scalar curvature of $g$. According to Theorem \ref{no_two_signs}, there cannot exist two K\"ahler metrics with different total scalar curvature on $M$, so the value of the right hand side is zero. Due to Theorem \ref{4d_abund}, $\kod(X) = 0$. To conclude that $K$ is torsion, we can argue as follows (as done by Yau in \cite{Yau_74}).\par
Let $\ell\in \NN^+$ such that there exists $\sigma \in H^0(M,\ell K)\setminus \{0\}$. Assume that $\divisor(\sigma) =D \not = \emptyset$. Then $\Theta=\sqrt{-1}\partial \bar \partial \log |\sigma|^2$ is a closed positive current such that $\Theta$ represents $2\pi\ell c_1(K)$. According to the Poincar\'e-Lelong formula, \label{PL_formula}
\begin{align*} 
0 & < 2\pi \int_D \omega^{n-1} \\
& =  \int_M \Theta\wedge \omega^{n-1}\\
&=-\ell \int _M \Ric (g)\wedge \omega^{n-1}\\
&=-\ell\int_M  \frac 2 n s\, \omega^n \\
& = 0.\\
\end{align*}
Contradiction. Thus, $\divisor(\sigma)=\emptyset$ and $K$ is torsion.

\section{Concluding remarks}\label{sec_concl_rem}
As we remarked in the Introduction, Theorem \ref{mthm1} is almost an equivalence for the following reason. Take an arbitrary K\"ahler metric on a desingularization $\tilde N$ of the variety $N$ in the definition of uniruledness of $M$. Then $\PP^1 \times \tilde N$ carries a product K\"ahler metric of positive (total) scalar curvature simply because the contribution to the curvature from $\tilde N$ can be scaled down until positivity is reached due to the positivity of the Fubini-Study metric on $\PP^1$. However, $\PP^1\times \tilde N$ still rationally dominates $M$. It thus seems likely that $M$ itself carries a K\"ahler metric of positive total scalar curvature, although we know of no rigorous proof.\par
A different way of looking at the possibility of a converse of Theorem \ref{mthm1} is via \cite{BDPP_04}. Recall that their main technical result is the following, which, combined with \cite{MM}, immediately implies the above-used \cite[Corollary 0.3]{BDPP_04}.
\begin{theorem}[{\cite[Theorem 2.2]{BDPP_04}}]\label{bdpp_mthm} 
Let $X$ be a complex projective manifold. Then a class $\alpha\in {\rm NS}_\RR(X)$ is pseudo-effective if (and only if) it is in the dual cone of the cone ${\rm SME}(X)$ of strongly movable curves.
\end{theorem}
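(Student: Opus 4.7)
The plan is to establish the equivalence in two stages. The easier direction is that any pseudo-effective class $\alpha$ lies in the dual of ${\rm SME}(X)$. Representing $\alpha$ by a closed positive $(1,1)$-current $T$ and taking a strongly movable curve $C$ arising as the image under a birational modification $\mu\colon \tilde X\to X$ of a complete intersection $\mu_*(\tilde H_1\cdots \tilde H_{n-1})$ of very ample classes on $\tilde X$, the intersection number $\alpha\cdot C$ can be computed by deforming $C$ generically in its covering family. Using Demailly's regularization of $T$ by currents with analytic singularities, together with the fact that the generic member of the covering family avoids any fixed proper analytic subset of $X$, one obtains $\alpha\cdot C\geq 0$ by a limiting argument that reduces to integrating a smooth representative against the current.

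The converse is the deep part. Suppose that $\alpha\in {\rm NS}_\RR(X)$ pairs non-negatively with every strongly movable curve but, for contradiction, that $\alpha$ is not pseudo-effective. Since ${\rm NS}_\RR(X)$ is finite-dimensional and the pseudo-effective cone ${\rm Psef}(X)$ is closed and convex, Hahn--Banach separation produces a class $\eta$ with $\eta\cdot \alpha < 0$ while $\eta\cdot \beta\geq 0$ for every $\beta\in {\rm Psef}(X)$. The task reduces to showing that such an $\eta$ can be realized as a limit of strongly movable curve classes; in other words, that the dual of ${\rm Psef}(X)$ equals $\overline{{\rm SME}(X)}$.

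The technical heart is an orthogonality property for the volume function on the big cone. Given a big class $\beta$ and a Fujita approximation $\mu^*\beta = a + e$ on a birational model $\tilde X$, with $a$ ample and $e$ effective such that $a^n$ approaches ${\rm vol}(\beta)$, the orthogonality estimate asserts that $a^{n-1}\cdot e$ becomes negligible as the approximation tightens. Using this, one computes the directional derivative of the volume function at a boundary point of ${\rm Psef}(X)$, arranged by perturbing $\alpha$ by a small ample class, and identifies it with the intersection against a class obtained as $\mu_*(a^{n-1})$. Because $a$ is a limit of ample classes from birational modifications, $\mu_*(a^{n-1})\in\overline{{\rm SME}(X)}$; thus $\eta$ must lie in this closure, contradicting $\eta\cdot\alpha<0$.

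The main obstacle is establishing the orthogonality estimate, which requires a delicate analysis of the Monge--Amp\`ere mass distribution of positive currents with minimal singularities, tying the non-K\"ahler part of the current to intersection numbers on birational models (and, in BDPP's approach, an application of Yau's theorem on solving Monge--Amp\`ere equations to concentrate mass). A second technical point is the passage from movable classes in the abstract dual sense to actual strongly movable curve classes: the construction of $\mu_*(a^{n-1})$ must be done uniformly in the birational modification $\mu$ to guarantee that the limit remains in the closure of ${\rm SME}(X)$ rather than in the a priori larger cone of all movable curves.
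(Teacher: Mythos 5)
This theorem is not proved in the paper at all: it is quoted verbatim as \cite[Theorem 2.2]{BDPP_04} and used as a black box (indeed, only its consequence \cite[Corollary 0.3]{BDPP_04} is invoked in the proofs of Theorems \ref{mthm1}--\ref{no_two_signs}). So there is no internal argument to compare yours against; what you have written is an outline of the original Boucksom--Demailly--Paun--Peternell proof. As a roadmap it is faithful: the easy inclusion does follow from the projection formula applied to $\mu^*\alpha$ paired with the ample classes $\tilde H_i$ (for $\alpha\in{\rm NS}_\RR(X)$ one can even bypass current regularization by writing $\alpha$ as a limit of effective $\RR$-classes), and the hard inclusion is correctly reduced, via separation of closed convex cones in the finite-dimensional space ${\rm NS}_\RR(X)$ and its dual, to showing that every functional that is non-negative on the pseudo-effective cone lies in $\overline{{\rm SME}(X)}$, which in turn rests on the orthogonality estimate $\bigl(a^{n-1}\cdot e\bigr)^2\leq C\,\bigl(\vol(\beta)-a^n\bigr)$ for approximate Zariski decompositions and the resulting one-sided differentiability of the volume function.

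As a proof, however, your proposal has a genuine gap, and it is the load-bearing one: the orthogonality estimate is only named as ``the main obstacle,'' not established. Essentially all of the depth of the theorem is concentrated there; without it, the identification of the separating functional with a limit of classes $\mu_*(a^{n-1})$ does not go through, and proving it requires the concentration-of-mass argument via Yau's resolution of the Monge--Amp\`ere equation \cite{Yau} (or an algebraic substitute), which cannot be assumed inside a proof of the very statement it underlies. Two secondary points you gesture at but would need to make precise: the regularization step in the easy direction needs the ample classes $\tilde H_i$ to absorb the bounded negativity $-\varepsilon\omega$ of Demailly's approximating currents, and the passage from $\mu_*(a^{n-1})$ with $a$ merely ample to an honest limit of strongly movable classes uses density of rational ample classes together with very ampleness of suitable multiples. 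Within the present paper the appropriate move is simply to cite the result, as the authors do in stating Theorem \ref{bdpp_mthm}.
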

Now, if $M$ is uniruled, and thus $K$ is not pseudo-effective, Theorem \ref{bdpp_mthm} yields, by the definition of strongly movable curves \cite[Definition 1.3(v)]{BDPP_04}, that there exist a modification $\mu:\tilde M \to M$ and very ample divisor classes $\tilde A_1,\ldots,\tilde A_{n-1}$ on $\tilde M$ such that
$$\int_M c_1(M)\wedge \mu_*(\tilde A_1\wedge \ldots\wedge \tilde A_{n-1}) >0.$$ 
It thus seems probable that a uniruled $M$ itself possesses a K\"ahler metric of positive total scalar curvature, although we again do not have a complete proof. For some further evidence concerning this, we refer to \cite[Corollary 5.18]{Hitchin} and \cite[Proposition 1]{Yau_74}.\par
Moreover, we remark that Theorem \ref{mthm2} {\it is} an equivalence except in the uniruled case. Namely, if the canonical line bundle of $M$ is torsion, then the first real Chern class of $M$ is equal to zero. By Yau's solution of the Calabi Conjecture \cite{Yau}, there is a Ricci-flat K\"ahler metric on $M$, which has zero (total) scalar curvature.\par
For the future, we think it might be interesting to explore implications of other forms of positive curvature in terms of the existence of rational curves. This is motivated to a significant extent by the pioneering work of Yau \cite{Yau_74}. We hope to return to this subject in a subsequent paper.\par
Finally, we address a suggestion from one of the referees by introducing the cohomological invariant
$$T(\omega)=\int _M c_1(M)\wedge \omega ^{n-1},$$
where $\omega$ is an arbitrary K\"ahler form on $M$. Note that $T(\omega)$ simply equals $\frac{(n-1)!}{\pi}$ times the total scalar curvature of the associated K\"ahler metric $g$. With the help of $T$, our results can be summarized as follows.
\begin{itemize}
\item[(1)] If $T$ assumes a positive value somewhere, then $M$ is uniruled.\\
\item[(2)] If $T$ assumes more than one sign from $\{+,0,-\}$, then $M$ is uniruled.\\
\item[(3)] If $M$ is not uniruled and $T$ assumes the value zero somewhere, then $T$ is identically equal to zero and $M$ is Calabi-Yau.
\end{itemize}


\begin{thebibliography}{BDPP04}

\bibitem[BDPP04]{BDPP_04}
S.~Boucksom, {J.-P.}~Demailly, M.~Paun, and P.~Peternell.
\newblock The pseudo-effective cone of a compact {K}\"ahler manifold and
  varieties of negative {K}odaira dimension. Current version available electronically at {\tt www-fourier.ujf-grenoble.fr/$_{\widetilde{~}}$demailly/research.html}. Earlier version: {\em {\rm arXiv:math/0405285v1}}, 2004.

\bibitem[Ber66]{Berger}
M.~Berger.
\newblock Sur les vari\'et\'es d'{E}instein compactes.
\newblock In {\em Comptes {R}endus de la {III}e {R}\'eunion du {G}roupement des
  {M}ath\'ematiciens d'{E}xpression {L}atine ({N}amur, 1965)}, pages 35--55.
  Librairie Universitaire, Louvain, 1966.

\bibitem[Deb01]{debarre_book}
O.~Debarre.
\newblock {\em Higher-dimensional algebraic geometry}.
\newblock Universitext. Springer-Verlag, New York, 2001.

\bibitem[Dem01]{Dem}
J.-P.~Demailly.
\newblock Multiplier ideal sheaves and analytic methods in algebraic geometry.
\newblock In {\em School on {V}anishing {T}heorems and {E}ffective {R}esults in
  {A}lgebraic {G}eometry ({T}rieste, 2000)}, volume~6 of {\em ICTP Lect.
  Notes}, pages 1--148. Abdus Salam Int. Cent. Theoret. Phys., Trieste, 2001.

\bibitem[Hit75]{Hitchin}
N.~Hitchin.
\newblock On the curvature of rational surfaces.
\newblock In {\em Differential geometry ({P}roc. {S}ympos. {P}ure {M}ath.,
  {V}ol. {XXVII}, {P}art 2, {S}tanford {U}niv., {S}tanford, {C}alif., 1973)},
  pages 65--80. Amer. Math. Soc., Providence, R. I., 1975.

\bibitem[HLW10]{heier_lu_wong}
G.~Heier, S.~S.~Y. Lu, and B.~Wong.
\newblock On the canonical line bundle and negative holomorphic sectional
  curvature.
\newblock {\em Math. Res. Lett.}, 17(6):1101--1110, 2010.

\bibitem[Kaw85]{Kawamata_85}
Y.~Kawamata.
\newblock Minimal models and the {K}odaira dimension of algebraic fiber spaces.
\newblock {\em J. Reine Angew. Math.}, 363:1--46, 1985.

\bibitem[Kle66]{Kleiman}
S.~Kleiman.
\newblock Toward a numerical theory of ampleness.
\newblock {\em Ann. of Math. (2)}, 84:293--344, 1966.

\bibitem[Kol96]{kollar_book}
J.~Koll{\'a}r.
\newblock {\em Rational curves on algebraic varieties}, volume~32 of {\em
  Results in Mathematics and Related Areas. 3rd Series. A Series of Modern
  Surveys in Mathematics}.
\newblock Springer-Verlag, Berlin, 1996.

\bibitem[Miy87]{M1}
Y.~Miyaoka.
\newblock Deformations of a morphism along a foliation and applications.
\newblock In {\em Algebraic geometry, {B}owdoin, 1985 ({B}runswick, {M}aine,
  1985)}, volume~46 of {\em Proc. Sympos. Pure Math.}, pages 245--268. Amer.
  Math. Soc., Providence, RI, 1987.

\bibitem[MM86]{MM}
Y.~Miyaoka and S.~Mori.
\newblock A numerical criterion for uniruledness.
\newblock {\em Ann. of Math. (2)}, 124(1):65--69, 1986.

\bibitem[Mor79]{Mori_79}
S.~Mori.
\newblock Projective manifolds with ample tangent bundles.
\newblock {\em Ann. of Math. (2)}, 110(3):593--606, 1979.

\bibitem[Nak04]{Nakayama}
N.~Nakayama.
\newblock {\em Zariski-decomposition and abundance}, volume~14 of {\em MSJ
  Memoirs}.
\newblock Mathematical Society of Japan, Tokyo, 2004.

\bibitem[Siu10]{siu}
Y.~T. Siu.
\newblock Abundance conjecture.
\newblock {\em {\rm arXiv:math.AG/0912.0576v3}}, 2010.

\bibitem[SY80]{siu_yau}
Y.~T. Siu and S.~T. Yau.
\newblock Compact {K}\"ahler manifolds of positive bisectional curvature.
\newblock {\em Invent. Math.}, 59(2):189--204, 1980.

\bibitem[Yau74]{Yau_74}
S.~T. Yau.
\newblock On the curvature of compact {H}ermitian manifolds.
\newblock {\em Invent. Math.}, 25:213--239, 1974.

\bibitem[Yau78]{Yau}
S.~T. Yau.
\newblock On the {R}icci curvature of a compact {K}\"ahler manifold and the
  complex {M}onge-{A}mp\`ere equation. {I}.
\newblock {\em Comm. Pure Appl. Math.}, 31(3):339--411, 1978.

\end{thebibliography}
\end{document}